\newtheorem{theorem}{Theorem}[section]
\newtheorem{lemma}{Lemma}[section] 
\newtheorem{prop}{Proposition}[section]
\theoremstyle{Definition}
\newtheorem{definition}{Definition}
\theoremstyle{Example}
\newtheorem{remark}{Remark}
\DeclareMathOperator{\A}{A}
\newcommand{\lra}{\longrightarrow}
\newcommand{\F}{\mathbb{F}}
\newcommand{\Z}{\mathbb{Z}}
\DeclareMathOperator{\Per}{Per}
\title{Transitive and non-transitive subgroups of permutation groups}
\author{Arda Demirhan}
\address{Arda Demirhan\\Department of Mathematics\\ University of Rochester\\
  Rochester, NY, 14620, USA}
\email{a.demirhan@rochester.edu}
\author{Jacob Miller}
\address{Jacob Miller\\Department of Mathematics\\ University of Iowa\\
 Iowa City, Iowa, 52242-1419, USA}
\email{jacob-miller-2@uiowa.edu}
\author{Yixu Qiu}
\address{Yixu Qiu\\Department of Mathematics\\ University of Rochester\\
  Rochester, NY, 14620, USA}
\email{yqiu21@u.rochester.edu}
\author{Thomas J. Tucker}
\address{Thomas J. Tucker\\Department of Mathematics\\ University of Rochester\\
  Rochester, NY, 14620, USA}
\email{tjtucker@gmail.com}
\author{Zheng Zhu}
\address{Zheng Zhu\\Department of Mathematics\\ University of Rochester\\
  Rochester, NY, 14620, USA}
\email{zzhu32@ur.rochester.edu}
\date{\today}
\begin{document}

\maketitle

\begin{abstract}
 We treat the problem of finding transitive subgroups $G$ of $S_n$
 containing normal subgroups $N_1$ and $N_2$, with $N_1$ transitive
 and $N_2$ not transitive, such that $G/N_1 \cong G/N_2$.  We show
 that such $G$ exist whenever $n$ has a prime factor that also divides
 $\phi(n)$.  We show that no such $G$ exist when $n = pq$ for $p < q$
 with $p$ not dividing $q-1$.  
\end{abstract}

\section{Introduction}
Suppose we have an $n$ such that $G \subseteq S_{n}$ contain two
normal subgroups $N_1$ and $N_2$ such that $G/N_1 \cong G/N_2$ where
$N_{1}$ is transitive and $N_{2}$ is not (acting on $\{1, \dots n \}$.
Zheng \cite{Zhu} has shown that one take can any such group and
create a group action on the infinite $n$-ary toted tree $T_n$ with
the curious property that the fixed point process for this group
action is not a Martingale, answering a question implicit in the work
of Bridy, Jones, Kelsey and Lodge \cite{Rafe, BJKL} on iterated Galois groups of
rational functions.

In this paper, we will a partial answer to the question of which $n$
admit permutation groups $G$ of degree $n$ that contain normal subgroups $N_1$ and $N_2$ such that $G/N_1 \cong G/N_2$ where
$N_{1}$ is transitive and $N_{2}$ is not.  

Recall that $\phi(n)$ (the Euler $\phi$-function of $n$) is the number
of positive integers less than $n$ that are prime to $n$.  

\begin{theorem}\label{example-thing}
  Let $n$ be an integer that has a prime factor $p$ such that $p$
  divides $\phi(n)$.  Then there is a transitive permutation group $G$
  of degree $n$ that contains a normal transitive subgroup $N_1$ of
  index $p$ and a normal non-transitive subgroup $N_2$ of index $p$.
\end{theorem}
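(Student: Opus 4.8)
To prove Theorem~\ref{example-thing}, the plan is to realize $G$ as a subgroup of the one-dimensional affine group over $\Z/n\Z$ acting on the $n$ points of $\Z/n\Z$. Concretely, I would take an element $u \in (\Z/n\Z)^\times$, let $\sigma$ denote the automorphism $x \mapsto ux$ of $\Z/n\Z$, write $t_a$ for the translation $x \mapsto x+a$, and set
$$ G = \langle t_a : a \in \Z/n\Z\rangle \rtimes \langle \sigma \rangle \cong (\Z/n\Z) \rtimes \langle \sigma\rangle, $$
acting on $\Z/n\Z$. The translations already act transitively, so $G$ is a transitive group of degree $n$; if $u$ has order exactly $p$ then $|G| = np$, and since the orbit of $0$ has size $n$ the point stabilizer is exactly $G_0 = \langle \sigma \rangle \cong \Z/p\Z$.

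The criterion that drives everything is the following: for a normal subgroup $N \trianglelefteq G$ of prime index $p$, $N$ is transitive if and only if $NG_0 = G$, which (as $[G:N]=p$ is prime, so $NG_0 \in \{N,G\}$) holds if and only if $G_0 \not\subseteq N$. So I want two surjections $G \to \Z/p\Z$, one nonzero on $\sigma$ (yielding a transitive $N_1$) and one vanishing on $\sigma$ (yielding a non-transitive $N_2$). For $N_1$ I take the translation subgroup $\{t_a\}$, the kernel of $\sigma \mapsto 1,\ t_a \mapsto 0$; it is normal of index $p$, transitive, and does not contain $G_0$. For $N_2$ I use the reduction map $\psi_2 : G \to \Z/p\Z$, $t_a \mapsto a \bmod p$, $\sigma \mapsto 0$. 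Here both hypotheses enter: reduction $a \mapsto a \bmod p$ makes sense precisely because $p \mid n$, and $\psi_2$ respects the defining relation $\sigma t_a \sigma^{-1} = t_{ua}$ precisely when $u \equiv 1 \pmod p$, since then $ua \equiv a \pmod p$. Granting such a $u$, $\psi_2$ is a well-defined surjection, $N_2 = \ker\psi_2$ is normal of index $p$, and $\sigma \in N_2$ gives $G_0 \subseteq N_2$, so $N_2$ is non-transitive. Both quotients are cyclic of order $p$, whence $G/N_1 \cong \Z/p\Z \cong G/N_2$.

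The remaining point, and the crux, is the existence of $u \in (\Z/n\Z)^\times$ of order exactly $p$ with $u \equiv 1 \pmod p$. I would prove this by a valuation count on the reduction homomorphism $r : (\Z/n\Z)^\times \to (\Z/p\Z)^\times$ (defined since $p \mid n$). Its image lies in $(\Z/p\Z)^\times$, a group of order $p-1$ coprime to $p$, so the kernel $K = \{u : u \equiv 1 \pmod p\}$ satisfies
$$ v_p(|K|) = v_p(\phi(n)) - v_p(|\operatorname{im} r|) = v_p(\phi(n)) \geq 1, $$
using $p \mid \phi(n)$. Hence $p \mid |K|$, and as $K$ is a finite abelian group it contains an element $u$ of order exactly $p$, which by membership in $K$ satisfies $u \equiv 1 \pmod p$. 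This $u$ completes the construction. The main obstacle is thus isolated in this last step: simultaneously enforcing ``order exactly $p$'' and ``$u \equiv 1 \pmod p$''. The valuation computation shows the two hypotheses $p \mid n$ and $p \mid \phi(n)$ are exactly what make this possible, the former giving the reduction map and the latter supplying the $p$-torsion in its kernel.
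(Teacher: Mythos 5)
Your construction is correct and is essentially the paper's: your $G = (\Z/n\Z)\rtimes\langle\sigma\rangle$ is exactly the group $\langle \tau,\sigma\rangle$ generated by the $n$-cycle $\tau$ (translation by $1$) and an element $\sigma$ of order $p$ normalizing it, with the same $N_1=\langle\tau\rangle$ and $N_2=\langle\tau^p,\sigma\rangle$, the only cosmetic difference being that you certify non-transitivity of $N_2$ via the stabilizer criterion while the paper notes $|N_2|=n$ with a nontrivial point-fixing element. (The condition $u\equiv 1\pmod p$ that you arrange by a valuation count is automatic for any $u$ of order $p$ in $(\Z/n\Z)^{\times}$, since its image in $(\Z/p\Z)^{\times}$ has order dividing both $p$ and $p-1$.)
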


While we cannot prove a true converse to Theorem \ref{example-thing},
we can prove the following.  
\begin{theorem}\label{main-thm}
  Let $p < q$ be primes.  Suppose that $p$ does not divide $q-1$.
  Then there is no permutation group $G$ of degree $pq$ with the
  property that $G$ contains a transitive subgroup $N_1$ and a
  non-transitive subgroup $N_2$ such that $G/N_1 \cong G/N_2$.
\end{theorem}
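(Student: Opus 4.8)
The plan is to first reduce to a rigid normal structure and then split on the action inside the blocks. Since $G/N_1$ and $G/N_2$ are groups, $N_1,N_2\trianglelefteq G$; as $N_1$ is transitive so is $G$, and from $G/N_1\cong G/N_2$ we get $|N_1|=|N_2|$, while transitivity of $N_1$ forces $pq\mid|N_1|=|N_2|$. The orbits of the normal subgroup $N_2$ form a block system whose common block size divides $pq$ and is strictly less than $pq$, hence equals $1$, $p$, or $q$. Size $1$ forces $N_2=1$ (faithfulness), hence $N_1=1$, contradicting transitivity. If the block size is $p$ (so there are $q$ blocks $C_1,\dots,C_q$), then $N_2$ fixes each block and embeds in $\prod_{j=1}^q\mathrm{Sym}(C_j)\cong (S_p)^q$, so $|N_2|\mid (p!)^q$; since $q>p$ we have $q\nmid p!$ and hence $q\nmid|N_2|$, contradicting $q\mid|N_2|$. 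Therefore $N_2$ has exactly $p$ blocks of size $q$.

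Next I would set up the wreath-product frame. Since $N_2\trianglelefteq G$, the group $G$ preserves the partition into the $p$ blocks $B_1,\dots,B_p$ of size $q$, so $G\le S_q\wr S_p$. Write $\rho\colon G\to S_p$ for the action on blocks, $K=\ker\rho=G\cap (S_q)^p$, and $\overline G=\rho(G)\le S_p$, which is transitive of degree $p$. Then $N_2\le K$, while $\rho(N_1)$ is transitive of degree $p$ (as $N_1$ is transitive, it is transitive on blocks), so $\rho(N_1)\ne 1$ and $p\mid|N_1|$. Let $T_i\le\mathrm{Sym}(B_i)\cong S_q$ be the projection of $K$ to the $i$-th block; since each $B_i$ is an $N_2$-orbit and $N_2\le K$, this projection is transitive, so each $T_i$ is a transitive group of prime degree $q$, and the $T_i$ are conjugate because $\overline G$ permutes the blocks transitively.

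The engine of the argument is the solvable case. By Galois's theorem on solvable transitive groups of prime degree, if the $T_i$ are solvable then each lies in $\mathrm{AGL}(1,q)$, so $|T_i|\mid q(q-1)$. Here the hypothesis $p\nmid q-1$ enters decisively: since also $p\ne q$, we get $p\nmid q(q-1)$, hence $p\nmid|T_i|$ for all $i$, and therefore $p\nmid|K|$ because $K\le\prod_iT_i$. But $N_2\le K$, so $p\nmid|N_2|$, contradicting $pq\mid|N_2|$. Thus no such configuration survives when the blocks carry solvable groups; this is precisely the reverse of the Frobenius construction $C_q\rtimes C_p$ that produces examples when $p\mid q-1$.

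The main obstacle is the non-solvable case, where by Burnside's theorem each $T_i$ is a $2$-transitive (hence almost simple) group of degree $q$, so $p$ may divide $|T_i|$ and the order argument above breaks down. Here I would analyze the socle: $\mathrm{soc}(K)$ is a product of copies of a nonabelian simple group $T$ permuted by $\overline G$ through $\rho$, and I would classify the $\overline G$-invariant normal subgroups of $K$ contained in this socle (the relevant subdirect and diagonal products), as in the model computation for $T\wr\overline G$, where the only $\overline G$-invariant normal subgroups contained in $T^p$ are $1$ and $T^p$ (e.g.\ $T=A_5$ when $q=5$). The goal is to show that any normal $N_2\le K$ with $pq\mid|N_2|$ and any transitive normal $N_1$ with $|N_1|=|N_2|$ and $G/N_1\cong G/N_2$ are forced to coincide—contradicting that $N_1$ is transitive while $N_2$ is not—or else to exhibit a mismatch in a suitable composition-factor multiplicity of the two quotients. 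Controlling all transitive $\overline G\le S_p$ and all subdirect products simultaneously, and ruling out the general isomorphism $G/N_1\cong G/N_2$ rather than just the index-$p$ situation of Theorem \ref{example-thing}, is the part I expect to require the most care.
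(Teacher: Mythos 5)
Your reduction to the wreath-product frame and your treatment of the solvable case are both correct, and essentially reproduce the paper's setup (its Proposition \ref{theta} establishes exactly your block decomposition: $p$ blocks of size $q$, $N_2 \le K = \ker\rho$, and transitive projections of $N_2$ into each block; your solvable-case order count is if anything more direct than the paper's). But the doubly transitive case is not an obstacle you have deferred --- it is the theorem --- and your sketch of it is missing the two ideas that actually close it. First, you never identify where the hypothesis $p \nmid q-1$ enters when the block groups $T_i$ are nonsolvable. The paper's answer is Wielandt's theorem (Theorem \ref{W}): for a transitive simple $A \le S_q$, the quotient $N_{S_q}(A)/A$ is cyclic of order dividing $q-1$. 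Combined with the fact that every nontrivial normal subgroup of a transitive subgroup of $S_q$ contains its simple transitive normal subgroup (Lemma \ref{contain}), this yields the dichotomy of Proposition \ref{trivial} and Lemma \ref{pq}: any normal subgroup $B$ of a transitive $A \le S_q$ with $p \mid [A:B]$ must be trivial, and hence satisfies $q \mid [A:B]$. A socle/subdirect-product analysis as you propose would have to rediscover exactly this; as written it gives no mechanism for the hypothesis $p \nmid q-1$ to act in the nonsolvable case.

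Second, your stated goal for the hard case --- ``force $N_1 = N_2$'' or ``exhibit a composition-factor mismatch'' --- is not how the contradiction arises, and neither seems workable directly, since in the doubly transitive case $p$ may well divide $|N_2|$ and $|K|$, so no order count on $N_2$ alone can finish. The paper's endgame uses the isomorphism $G/N_1 \cong G/N_2$ for more than $|N_1| = |N_2|$: it transports the normal subgroup $M_2 = K$ (which contains $N_2$) across that isomorphism to obtain a normal subgroup $M_1 \supseteq N_1$, hence transitive, with $|M_1| = |M_2|$. Then $[M_2 : M_1 \cap M_2] = [M_1 : M_1 \cap M_2] = |\rho(M_1)|$, which is divisible by $p$ (since $\rho(M_1) \le S_p$ is transitive) and divides $p!$; on the other hand $M_1 \cap M_2$ is normal in $M_2$, which sits in $\prod_i T_i$ with transitive projections, so the dichotomy above, extended to products by Lemma \ref{index2}, forces $q \mid [M_2 : M_1 \cap M_2]$ --- contradicting $q \nmid p!$. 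Without Wielandt's bound and without this transport of $K$ through the quotient isomorphism, the nonsolvable case remains open in your argument.
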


An outline of the paper is as follows.  In the next section, we prove
Theorem \ref{example-thing}.  In Section \ref{wreath}, we introduce
wreath products, which are crucial in our proof of Theorem
\ref{main-thm}.  , we obtain an embedding of our $G$ into
$S_p \wr S_q$, described in Proposition \ref{theta}. This essentially
reduces our problem to the study of transitive subgroups of $S_q$.  In
Section \ref{perm}, we present known results on permutation groups of
prime degree; these allow us to prove Lemma \ref{pq}.  In Section
\ref{main-section}, we use Proposition \ref{theta} and Lemma \ref{pq}
to complete the proof of Theorem \ref{main-thm}.

{\bf Acknowledgments.}  We would like to thank 
Marston Conder and Gareth Jones for many very helpful conversations about
permutation groups of prime degree. 

\section{Proof of Theorem \ref{example-thing}}

The proof of Theorem \ref{example-thing} is quite simple.

\begin{proof}[Proof of Theorem \ref{example-thing}]

  Since $p$ divides $\phi(n)$, there is an $i$ in $(\Z/n\Z)^*$ such
  that $i$ has order $p$ in $(\Z/n\Z)^*$.  Then $(1 \cdots n)^i$ is
  also an $n$-cycle, so there is an element
  $\sigma$ in $S_n$ of order $p$ such that $\sigma(1) = 1$ and
  $\sigma (1 \, \cdots \, n) \sigma^{-1} = (1 \, \cdots \, n)^i$.  Let
  $\tau$ denote the cycle $(1 \, \cdots \, n)$.  Let $G = \langle
  \tau, \sigma \rangle$, let $N_1 = \langle \tau \rangle$ , and let
  $N_2 = \langle \sigma, \tau^p \rangle$.  Then $N_1$ and $N_2$ are
  both normal of index $p$ in $G$.  Since
  $|N_2| = n$ and a non-identity element of $N_2$ has a fixed point,
  $N_2$ cannot be transitive.

  \end{proof}

\section{Wreath Products and Orbits of Subgroups}\label{wreath}
\begin{lemma}\label{orb}
  Let $G$ be a transitive subgroup of $S_n$ and let $N$ be a normal subgroup of $G$.  Then, for any $a \in \{1, \dots, n \}$, we have $g N a = Nb$ for some $b \in \{1, \dots, n \}$. In particular, for any $a, b \in \{1, \dots, n \}$, we have $|Na| = |Nb|$.
\end{lemma}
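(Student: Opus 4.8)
The plan is to prove that a normal subgroup $N$ of a transitive group $G$ acts with blocks that are permuted transitively by $G$, so all orbits of $N$ have the same size. First I would observe that since $G$ is transitive, for any $a, b \in \{1, \dots, n\}$ there exists $g \in G$ with $ga = b$. The key structural fact is that for a normal subgroup, group multiplication intertwines the $N$-action with the $G$-action: because $gN = Ng$ as sets (normality), we have $g(Na) = (gN)a = (Ng)a = N(ga)$. So I would set $b = ga$ and conclude $g(Na) = Nb$, which is exactly the first assertion of the lemma.

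Having established that $g$ carries the orbit $Na$ bijectively onto the orbit $Nb$, the equality of cardinalities $|Na| = |Nb|$ follows immediately, since $g$ acts as a permutation of $\{1, \dots, n\}$ and hence restricts to a bijection between these two finite sets. To get this for an arbitrary pair $a, b$, I would invoke transitivity of $G$ to produce the required $g$ with $ga = b$; then $Nb = N(ga) = g(Na)$, and the bijection $x \mapsto gx$ shows $|Na| = |Nb|$.

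The only subtlety worth spelling out carefully is the set-theoretic manipulation $g(Na) = (Ng)a$, which relies on the associativity of the group action together with the normality hypothesis $gN = Ng$; I would be explicit that $gN = Ng$ holds as an equality of subsets of $G$ precisely because $N \trianglelefteq G$. I do not expect any real obstacle here: the statement is essentially the standard fact that the orbits of a normal subgroup of a transitive group form a block system with equal-size blocks, and the proof is a short orbit-chasing argument. The most delicate point is purely notational, namely keeping straight when $N$, $g$ denote elements versus cosets versus subsets acting on $\{1, \dots, n\}$.
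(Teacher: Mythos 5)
Your proof is correct and follows essentially the same route as the paper's: set $b = ga$, use normality to write $g(Na) = (Ng)a = N(ga) = Nb$, and conclude equality of orbit sizes because $g$ acts as a bijection, invoking transitivity of $G$ to handle arbitrary pairs $a,b$. No issues.
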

\begin{proof}
  Let $ga = b.$ Since $N$ is normal in $G$, we have $gN = Ng$, so
  $Nb = Nga$ is simply all elements of $\{1, \dots, n \}$ of the form
  $gn a$ for some $n \in N$.  This set clearly has $|Na|$ elements.
\end{proof}

\begin{definition}
For a finite set $S$, we let $\Per(S)$ denote the group of bijections
from $S$ to itself (often called the permutations of $S$).  
\end{definition}

As usual, we let $S_m$ denote $\Per( \{ 1, \dots, m \})$.  We now
introduce wreath products. We follow the treatment of \cite{Odoni} and
\cite[Chapter 1.4]{Nek}; for a more general discussion of wreath products, see \cite[Page
172]{Rotman}).   

\begin{definition}
  We define the wreath product $S_m \wr S_n$ to be subgroup of
  \[ \Per( \{1, \dots, m \} \times \{1, \dots, n\}) \]
  consisting of all
  $\sigma \in \Per( \{1, \dots, m \} \times \{1, \dots, n\}$ such that
  for any $j, j' \in  \{1, \dots, n\}$, we have $\rho_1(\sigma(i,j)) =
  \rho_1(\sigma(i,j'))$ where $\rho_1: (k_1, k_2) \mapsto k_1$ for
  $(k_1, k_2) \in   \{1, \dots, m \} \times \{1, \dots, n\}$.  
\end{definition}

The wreath product $S_m \wr S_n$ may also be thought of as elements of
the form $(\sigma; \tau_1, \dots, \tau_n)$, with $\sigma \in S_m$ and
$\tau_k \in S_n$,  acting on the set of $(i,j)
\in \{1, \dots, m \} \times \{ 1, \dots, n \}$ by
\[ (\sigma; \tau_1, \dots, \tau_m))(i,j) = (\sigma(i), \tau_i(j)).\]

We let $p_1: S_m \wr S_n \lra S_m$ denote the homomorphism  sending $(\sigma; \tau_1, \dots, \tau_m)$
to $\sigma$.  (Note that the set of elements of the form $(e; \tau_1,
\dots, \tau_m)$ forms a normal subgroup of  $S_m \wr S_n$).

We let $\pi_i: S_m \wr S_n \lra S_n$ denote the map sending
$(\sigma; \tau_1, \dots, \tau_m)$ to $\tau_i$.  Note that while $\pi_i$ is
not a homomorphism in general on all of $S_m \wr S_n$, it does
restrict to a homomorphism on $\ker(p_1)$.  

\begin{prop}\label{theta}
Let $p < q$ be primes.  Let $G$ be a transitive subgroup of $S_{pq}$,
let $N_1$ be a transitive normal subgroup of $G$, let $N_2$ be a
non-transitive subgroup of $G$, and suppose that $|N_1| = |N_2|$.
Then there is a one-one homomorphism $\theta: G \lra S_p \wr S_q$ such
that
\begin{enumerate}
\item[(i)] $\theta(N_1)$ acts transitively on $\{ 1, \dots, p \} \times \{1,
\cdots q \}$;
\item[(ii)] $\theta(N_2) \subseteq \ker(p_1)$; and
 \item[(iii)] $\pi_i(\theta(N_2))$ is a transitive subgroup of $S_q$
   for all $i$.  
 \end{enumerate}
\end{prop}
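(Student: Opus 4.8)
The plan is to realize $\theta$ as the standard embedding of an imprimitive transitive group into a wreath product, taking as block system the orbits of $N_2$. Since $N_2$ is normal in $G$ (as in the situation of Theorem~\ref{main-thm}) and $G$ is transitive, Lemma~\ref{orb} shows that all orbits of $N_2$ on $\{1,\dots,pq\}$ share a common size $d$, and these orbits form a system of blocks permuted by $G$. As $N_1$ is transitive we have $pq \mid |N_1| = |N_2|$, so $N_2$ is nontrivial and $d > 1$; since $N_2$ is not transitive, $d$ is a proper divisor of $pq$, hence $d \in \{p, q\}$. Conditions (ii) and (iii) demand that $\theta(N_2)$ fix every block of size $q$ and act transitively inside each, i.e. that the blocks be precisely the $N_2$-orbits of size $q$; so the first and main task is to show $d = q$ (equivalently, that there are exactly $p$ blocks).

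I expect this step to be the crux, and it is where the hypothesis $p < q$ enters. Suppose toward a contradiction that $d = p$, so that the $N_2$-orbits give $q$ blocks $B_1, \dots, B_q$ of size $p$. Let $K$ be the kernel of the action of $G$ on these $q$ blocks. Restricting each element of $K$ to the blocks (which it fixes setwise) gives an injection $K \hookrightarrow \prod_{i=1}^{q}\Per(B_i) \cong (S_p)^q$, so $|K|$ divides $(p!)^q$. On the other hand $N_2 \subseteq K$, and $q \mid pq \mid |N_1| = |N_2|$, so $q$ divides $|K|$ and hence $q$ divides $(p!)^q$. Since $q$ is prime this forces $q \mid p!$, which is impossible because $q > p$. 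Therefore $d = q$, so $N_2$ has exactly $p$ orbits, each of size $q$.

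Once $d = q$ is established, the construction of $\theta$ is routine. Label the $p$ orbits of $N_2$ as $B_1,\dots,B_p$ and fix bijections $\phi_i : B_i \to \{1,\dots,q\}$, giving an identification of $\{1,\dots,pq\}$ with $\{1,\dots,p\}\times\{1,\dots,q\}$ under which $G$ acts imprimitively. For $g \in G$ define $\theta(g) = (\sigma; \tau_1,\dots,\tau_p)$, where $\sigma \in S_p$ records the permutation $g$ induces on the blocks and $\tau_i \in S_q$ is determined by $\phi_{\sigma(i)}(g x) = \tau_i(\phi_i(x))$ for $x \in B_i$; this is the usual imprimitive embedding, so $\theta$ is a homomorphism, and it is injective because an element of $G$ that fixes every block and acts trivially within each block fixes all $pq$ points. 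Condition (i) is then immediate, since $\theta$ is conjugate to the original action of $G$ and $N_1$ is transitive. For (ii), each $g \in N_2$ fixes every $N_2$-orbit $B_i$ setwise, so its block permutation is trivial and $p_1(\theta(g)) = e$, giving $\theta(N_2) \subseteq \ker(p_1)$. For (iii), since $\theta(N_2) \subseteq \ker(p_1)$ the map $\pi_i$ restricts to a homomorphism there, and for $g \in N_2$ the component $\tau_i = \pi_i(\theta(g))$ is just the action of $g$ on $B_i$ transported by $\phi_i$; as $N_2$ acts transitively on its orbit $B_i$, the image $\pi_i(\theta(N_2))$ is a transitive subgroup of $S_q$.
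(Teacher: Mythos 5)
Your proof is correct and takes essentially the same route as the paper's: both rule out orbit size $p$ for $N_2$ by observing that $N_2$ would then embed in a product of copies of $S_p$, so $q \nmid |N_2| = |N_1|$, contradicting the transitivity of $N_1$, and both then realize $\theta$ as the standard imprimitive embedding attached to the $N_2$-orbits. You are also right to flag that the argument requires $N_2$ to be normal in $G$ (so that Lemma \ref{orb} applies and the orbits form a block system), a hypothesis the statement omits but which the paper's own proof uses as well.
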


\begin{proof}
  By Lemma \ref{orb}, the orbits of $N_2$ all have length $q$ of
  length $p$.  If they have length $p$ then $|N_2|$ divides a power of
  $p!$ and thus $q$ does not divide $|N_2|$.  But then $pq$ does not
  divide $|N_1|$ so $N_1$ cannot act transitively on
  $\{ 1, \dots, pq \}$.  Thus, every orbit of $N_1$ must have length
  $q$.  After renumbering, we may assume that each orbit of $N_1$ has
  the form $\{ (i-1)q + 1 \dots iq \}$ for $i = 1, \dots, p$.  Let
  $\iota : \{1, \dots, m\} \times \{1, \dots, n \} \lra \{1, \dots, mn
  \}$ by $\iota(i,j) = (i-1)p + j$.  Then letting
  $\theta(\sigma(i,j) = \iota \circ \sigma \circ \iota^{-1}(i,j)$
  gives a homomorphism
  $$\theta: G \lra \Per(\{1, \dots, m\} \times \{1, \dots, n \}.$$
  Since any element of $G$ sends $\{ (i-1)q + 1, \dots, iq \}$ to a set
  of the form $\{ (i'-1)q + 1 \dots i'q \}$ for some $i'$, we see that
  $\rho_1(\theta(g)(i,j)) = \rho_1(\theta(g)(i,j'))$ for any
  $j, j' \in \{1, \dots, n \}$, so $\theta(G)$ is contained in
  $S_m \wr S_n$.  Since $N_2$ sends each set
  $\{ (i-1)q + 1 \dots iq \}$ to itself, we see that $\theta(N_2)$
  fixes the first coordinate of any $(i,j)$ so
  $\theta(N_2) \subseteq \ker p_1$.  Likewise, since $\iota$ sends
  each orbit of $N_2$ on to a set of the form
  $\{ (i,1), \dots, (i,q) \}$, we see that for each $i$, the group
  $\pi_i(N_2)$ acts transitively on the set $\{ 1, \dots, n \}$.
\end{proof}

We will also use the following simple criterion for deciding when a
subgroup of $S_m \wr S_n$ acts transitively on $\{1, \dots, m \}
\times \{1, \dots, n \}$.


\section{Permutation Groups of Prime Degree}\label{perm}

The following is a theorem due to Wielandt \cite{Wie} (see also
\cite[Kapitel V, Bemerkung 21.7]{Huppert} and \cite[Section
3]{Cameron}).

\begin{theorem}\label{W}
Let $A$ be a transitive simple subgroup of $S_q$ for $q$ a prime.
Let $N_{S_q}(A)$ denote the normalizer of $A$ in $S_q$.  Then 
$N_{S_q}(A)/A$ is cyclic of order dividing $q-1$.    
\end{theorem}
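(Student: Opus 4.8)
The plan is to prove this by Sylow theory together with an explicit determination of the normalizer of a Sylow $q$-subgroup; I note in passing that the simplicity of $A$ is not actually needed for this particular conclusion. Write $N = N_{S_q}(A)$. First I would observe that since $A$ is transitive of prime degree $q$, the orbit-stabilizer theorem gives $q \mid |A|$, so a Sylow $q$-subgroup $P \le A$ has order exactly $q$ (as $q^2 \nmid q!$). Any generator of $P$ is an element of order $q$ in $S_q$, hence necessarily a single $q$-cycle. Thus $P$ is simultaneously a Sylow $q$-subgroup of $A$, of $N$, and of $S_q$.

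The key structural input is the computation of $N_{S_q}(P)$. Identifying $\{1, \dots, q\}$ with $\F_q$ so that a fixed generator of $P$ becomes the translation $x \mapsto x+1$, a direct check shows that every affine map $x \mapsto ax+b$ with $a \in \F_q^\times$ and $b \in \F_q$ normalizes $P$, and that these exhaust $N_{S_q}(P)$: the centralizer of a $q$-cycle is $P$ itself, so by the $N/C$ theorem $N_{S_q}(P)/P$ embeds into $\Aut(P) \cong C_{q-1}$, while the affine maps already realize every automorphism of $P$. Hence $N_{S_q}(P)$ is the affine group $\F_q \rtimes \F_q^\times$ of order $q(q-1)$ and, crucially, $N_{S_q}(P)/P \cong C_{q-1}$ is cyclic.

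Now I would apply the Frattini argument. Since $P$ is a Sylow $q$-subgroup of the normal subgroup $A \trianglelefteq N$, we obtain $N = A\, N_N(P)$, and the second isomorphism theorem yields $N/A \cong N_N(P)/(N_N(P) \cap A) = N_N(P)/N_A(P)$. Both $N_N(P)$ and $N_A(P)$ contain $P$ and are subgroups of $N_{S_q}(P)$, and $N_A(P) = N_N(P) \cap A$ is normal in $N_N(P)$ because $A \trianglelefteq N$. Passing to quotients by $P$ gives
$$ N/A \;\cong\; \bigl(N_N(P)/P\bigr)\big/\bigl(N_A(P)/P\bigr), $$
a quotient of the subgroup $N_N(P)/P$ of the cyclic group $N_{S_q}(P)/P \cong C_{q-1}$. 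Since a quotient of a subgroup of a cyclic group is again cyclic of order dividing the original, $N/A$ is cyclic of order dividing $q-1$, as required.

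The main obstacle is really the second step, the exact identification of $N_{S_q}(P)$ with the affine group $\F_q \rtimes \F_q^\times$; everything else is formal. One should also take care in the Frattini step to verify that $N_A(P)$ is normal in $N_N(P)$ so that the final quotient is legitimate, but this follows at once from the normality of $A$ in $N$.
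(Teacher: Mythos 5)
Your proof is correct and complete. Note, however, that the paper does not prove this statement at all: it is quoted as a theorem of Wielandt, with pointers to Huppert and Cameron, so there is no internal argument to compare against. What you have supplied is the classical self-contained proof: identify a Sylow $q$-subgroup $P$ of $A$ as generated by a $q$-cycle, compute $N_{S_q}(P)$ as the affine group $\F_q \rtimes \F_q^{\times}$ via the centralizer computation and the $N/C$ theorem, and then transfer the cyclic structure of $N_{S_q}(P)/P$ down to $N_{S_q}(A)/A$ by the Frattini argument and the second and third isomorphism theorems. Each step checks out, including the point people often gloss over, namely that $N_A(P) = N_N(P)\cap A$ is normal in $N_N(P)$ so the final quotient is legitimate. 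Your observation that simplicity of $A$ is never used is also correct: the argument applies verbatim to any transitive subgroup of $S_q$, which is slightly stronger than the statement as quoted (the simplicity hypothesis matters elsewhere in the paper, e.g.\ in Lemma \ref{contain}, but not here). The only thing your write-up buys beyond the paper is self-containment; conversely, the citation route lets the paper avoid reproducing the $N_{S_q}(P) \cong \A_1(q)$ computation, which the paper in any case invokes separately in the discussion surrounding Theorem \ref{Burn}.
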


Next, we state a well-known theorem of Burnside.  For $q$ a prime, we let $\A_1(q)$
denote the group of affine linear transformations of $\F_q$.  M\"uller
\cite{Muller} gives a quick proof that every transitive subgroup of
$S_q$ that is not subgroup of $\A_1(q)$ must be doubly transitive.
The fact that doubly transitive permutation groups contain nonabelian
normal subgroups is proved in \cite[Chapter X, Theorem
XIII]{Burnside}.  

\begin{theorem}\label{Burn}
  Let $A$ be a transitive subgroup of $S_q$ for $q$ a prime.  Then either $A$ is
  a subgroup of $\A_1(q)$ or
  $A$ is doubly transitive and contains a normal subgroup $B$ that is
  nonabelian and simple.
\end{theorem}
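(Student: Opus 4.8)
The plan is to prove the two alternatives separately, matching the two sources cited: first the dichotomy (every transitive $A$ not contained in $\A_1(q)$ is doubly transitive), then the structural statement (a doubly transitive $A$ that is not contained in $\A_1(q)$ has a nonabelian simple normal subgroup). I would begin by recording the elementary preliminaries. Since $q$ is prime, any transitive $A \le S_q$ is primitive, a Sylow $q$-subgroup $P$ of $A$ has order exactly $q$ and is generated by a $q$-cycle, and its normalizer in $S_q$ is precisely $\A_1(q)$, of order $q(q-1)$. A short argument then shows $A \subseteq \A_1(q)$ if and only if $P \trianglelefteq A$: if $P$ is normal then $A \le N_{S_q}(P) = \A_1(q)$, while conversely a transitive $A \le \A_1(q)$ must contain the unique (normal) Sylow $q$-subgroup of $\A_1(q)$, namely $P$. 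Thus the dichotomy is equivalent to the implication: if $P$ is not normal in $A$, then $A$ is doubly transitive.

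For this implication I would use Burnside's character-theoretic method. Write the permutation character as $\pi = 1 + \chi$ with $\deg \chi = q-1$, and recall that $A$ is doubly transitive exactly when $\chi$ is irreducible, equivalently $[\chi,\chi]=1$. Restricting to $P = \langle c \rangle$, the $q$-cycle $c$ acts regularly, so $\pi|_P$ is the regular character of $P$ and $\chi|_P = \sum_{j=1}^{q-1}\psi_j$ is the sum of the $q-1$ nontrivial linear characters of $P$, each with multiplicity one. It follows that $\chi$ is multiplicity-free and that the $P$-supports of its distinct irreducible constituents partition $\{1,\dots,q-1\}$. Because $\pi$ is rational-valued, so is $\chi$, and hence $\Gal(\mathbb{Q}(\zeta_q)/\mathbb{Q}) \cong (\Z/q\Z)^*$ permutes the constituents of $\chi$; under restriction to $P$ this action is compatible with the natural multiplication action of $(\Z/q\Z)^*$ on the indices $\{1,\dots,q-1\}$, which is transitive. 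Transitivity forces the support blocks to share a common size $d$, so every constituent of $\chi$ has one degree $d$ dividing $q-1$, and double transitivity is the case $d = q-1$.

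The main obstacle is exactly the final step of Part 1: upgrading the numerical conclusion ``$d < q-1$'' to the group-theoretic conclusion ``$P \trianglelefteq A$.'' The Galois and degree bookkeeping above is necessary but not by itself sufficient, and this is precisely the point at which Burnside's original detailed estimates on the character values, or M\"uller's streamlined argument avoiding the heavier block-theoretic machinery, are needed; I would import that step from the cited references rather than reconstruct it.

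Finally, for Part 2 I would argue directly via the socle, where the prime degree keeps things clean. Assume $A$ is doubly transitive with $A \not\subseteq \A_1(q)$, and let $M$ be a minimal normal subgroup of $A$. Since $A$ is primitive, $M$ is transitive, so $q \mid |M|$; as $M \le S_q$ acts faithfully and a Sylow $q$-subgroup of $S_q$ has order $q$, a Sylow $q$-subgroup of $M$ is cyclic of order $q$. Writing $M \cong T^k$ with $T$ simple, a Sylow $q$-subgroup of $M$ is the direct product of $k$ copies of a Sylow $q$-subgroup of $T$, so $|\mathrm{Syl}_q(T)|^k = q$; since $q$ is prime this forces $k=1$, whence $M = T$ is simple. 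If $T$ were abelian it would be regular of order $q$, hence equal to $P$ and normal in $A$, giving $A \subseteq \A_1(q)$ and contradicting the hypothesis; therefore $T$ is nonabelian, and $B = M$ is the desired nonabelian simple normal subgroup.
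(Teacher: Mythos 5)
Your proposal should first be measured against the fact that the paper offers no proof of this theorem at all: it is stated as a known result, with the dichotomy attributed to M\"uller's short paper and the existence of a nonabelian simple normal subgroup in a doubly transitive group attributed to Burnside's book. So you are reconstructing an argument where the authors simply cite. Your reconstruction is sound in structure and mostly correct in detail. The preliminary reduction ($A \subseteq \A_1(q)$ if and only if the Sylow $q$-subgroup $P$ is normal in $A$) is right, and your character-theoretic bookkeeping for Part 1 --- $\chi$ multiplicity-free, the $P$-supports of its constituents partitioning $\{1,\dots,q-1\}$ into blocks permuted transitively by $(\Z/q\Z)^*$, hence of common size $d \mid q-1$ --- is the standard and correct setup. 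You are also right, and commendably honest, that this bookkeeping does not by itself yield the implication ``$d < q-1 \Rightarrow P \trianglelefteq A$''; that is the genuine content of Burnside's theorem, and you defer it to the cited sources exactly as the paper does. Your Part 2 is a complete and self-contained argument that is arguably cleaner than invoking \cite[Chapter X, Theorem XIII]{Burnside}: the minimal normal subgroup $M$ is transitive (for prime degree this needs only that the orbits of a normal subgroup have equal length), a Sylow $q$-subgroup of $M$ has order exactly $q$, and writing $M \cong T^k$ forces $k=1$ since the order of a Sylow $q$-subgroup of $T^k$ is the $k$-th power of that of $T$; the abelian case collapses to $M = P$ and hence $A \subseteq \A_1(q)$. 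Two small points: you should write ``the order of a Sylow $q$-subgroup of $T$, raised to the $k$'' rather than $|\mathrm{Syl}_q(T)|^k$, which reads as the number of Sylow subgroups; and to be safe about the Galois action you should work in $\Gal(\mathbb{Q}(\zeta_{|A|})/\mathbb{Q})$ and use that restriction to $\mathbb{Q}(\zeta_q)$ is surjective. Neither affects correctness. The only incompleteness is the deliberately imported step in Part 1, which puts your write-up on the same footing as the paper's citation while supplying strictly more of the surrounding argument.
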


\begin{remark}
A more precise description of the transitive subgroups of prime degree
can be found in work of Jones
\cite{Jones} (see also \cite{Guralnick}).  Note that this more precise
description depends on the classification of finite simple groups,
whereas the proof of Theorem \ref{W} does not.  
\end{remark}

\begin{lemma}\label{contain}
  Let $G$ be a subgroup of $S_q$ and let $H$ be a transitive normal
  subgroup of $G$ that is simple.  Then every nontrivial normal
  subgroup of $G$ contains $H$.  In particular, if $p$ is a prime such
  that $p \nmid [G:H]$, then the only normal subgroup $N$ of $G$ such
  that $p \mid [G:N]$ is $N = \{ e \}$.
\end{lemma}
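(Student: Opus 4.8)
The plan is to reduce the entire statement to the first assertion and obtain the ``in particular'' clause from a one-line index computation. So I focus on showing that an arbitrary nontrivial normal subgroup $N \trianglelefteq G$ contains $H$. First I would consider the intersection $N \cap H$. Since $N$ and $H$ are both normal in $G$, the subgroup $N \cap H$ is normal in $G$, and in particular normal in $H$. As $H$ is simple, $N \cap H$ is either $\{ e \}$ or all of $H$; in the latter case $H \subseteq N$ and we are finished. The whole content of the lemma is therefore to rule out $N \cap H = \{ e \}$ when $N \neq \{ e \}$.

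The key observation is that $N \cap H = \{ e \}$ forces $N$ to centralize $H$: for $n \in N$ and $h \in H$ the commutator $n h n^{-1} h^{-1}$ lies in $H$ because $H$ is normal, and lies in $N$ because $N$ is normal, hence lies in $N \cap H = \{ e \}$, so $n$ and $h$ commute. I would then exploit the hypothesis that $H$ is transitive of prime degree $q$. Transitivity forces $q \mid |H|$, so by Cauchy's theorem $H$ contains an element of order $q$, which in $S_q$ can only be a $q$-cycle $c$ (an element of order $q$ acting on $q$ points must be a single $q$-cycle). The centralizer of a $q$-cycle in $S_q$ is precisely the cyclic group $\langle c \rangle$ of order $q$ that it generates. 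Consequently $N \subseteq C_{S_q}(H) \subseteq C_{S_q}(c) = \langle c \rangle$.

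Since $\langle c \rangle$ has prime order $q$ and $N$ is nontrivial, this gives $N = \langle c \rangle$, and because $c \in H$ we conclude $N \subseteq H$, contradicting $N \cap H = \{ e \}$. Therefore $N \cap H = H$, that is, $H \subseteq N$, which proves the first statement. For the ``in particular'' clause I would take a normal subgroup $N$ with $p \mid [G:N]$ and suppose $N \neq \{ e \}$. By the first part $H \subseteq N$, so from $[G:H] = [G:N]\,[N:H]$ we get that $[G:N]$ divides $[G:H]$, and hence $p \mid [G:H]$, contradicting the assumption $p \nmid [G:H]$. Thus $N = \{ e \}$.

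The main obstacle is the middle step of ruling out trivial intersection, and the two facts it rests on are that a transitive subgroup of $S_q$ of prime degree $q$ contains a $q$-cycle, and that the centralizer of a $q$-cycle in $S_q$ is cyclic of order $q$. Both are standard, so I would either cite them or insert the brief arguments (Cauchy together with the observation about cycle type, and the elementary computation of the centralizer of a full cycle).
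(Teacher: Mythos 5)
Your proposal is correct and follows essentially the same route as the paper: intersect $N$ with $H$, use simplicity to reduce to the case $N \cap H = \{e\}$, deduce that $N$ centralizes $H$, and then use the fact that a transitive subgroup of $S_q$ contains a $q$-cycle whose centralizer in $S_q$ is the cyclic group it generates. You are in fact slightly more explicit than the paper at the final step (noting that a nontrivial $N \subseteq \langle c\rangle$ would force $N = \langle c\rangle \subseteq H$, contradicting trivial intersection), which is a welcome clarification rather than a divergence.
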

\begin{proof}
  Let $N$ be a normal subgroup of $G$.  Since $N$ is normal subgroup
  of $G$, it follows that $H \cap N$ is a normal subgroup of $H$, so
  $H \cap N$ is either $\{ e \}$ or all of $N$. Thus, if $N$ does not
  contain $H$, then $N \cap H = \{e \}$.  This means that
  $nh n^{-1} h^{-1} \in N \cap H = \{ e \}$ for any $h \in H$ and
  $n \in N$.  It follows that every element of $N$ commutes with every
  element of $H$.  Since $H$ is transitive, it contains a $q$-cycle,
  and $q$-cycles commute only with powers of themselves, so $N$ must
  be $\{ e \}$.  Thus, if $N$ is any nontrivial normal subgroup of
  $G$, then $[G:N]$ divides $[G:H]$, and thus cannot be divisible by
  any prime that does not divide $[G:H]$.  
\end{proof}

\begin{prop}\label{trivial}
Let $p < q$ be primes with $p \nmid (q-1)$.  
If $A$ is a transitive subgroup of $S_p$, and $B$ is a normal subgroup
of  $A$ with $p \mid [A:B]$, then $B$ is trivial.
\end{prop}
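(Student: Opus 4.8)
The plan is to feed the transitive subgroup $A$ of $S_q$ into the dichotomy of Theorem~\ref{Burn}. In the first alternative, $A$ is a subgroup of the affine group $\A_1(q)$, so by Lagrange $|A|$ divides $|\A_1(q)| = q(q-1)$. Since $p < q$ forces $p \neq q$, and $p \nmid (q-1)$ by hypothesis, the prime $p$ divides neither $q$ nor $q-1$, hence $p \nmid |A|$. But $[A:B]$ divides $|A|$, so $p \nmid [A:B]$, contradicting the hypothesis $p \mid [A:B]$; thus this alternative never arises and I may assume the second: $A$ is doubly transitive and contains a nonabelian simple normal subgroup $H$.

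Next I would verify that $H$ is transitive, so that the prime-degree machinery applies to it. Because $H \trianglelefteq A$ and $A$ is transitive, the orbits of $H$ are permuted transitively by $A$ and so share a common size $s$; this $s$ divides $q$, and it cannot equal $1$ (otherwise $H$ fixes every point and is trivial, contradicting that $H$ is nonabelian simple), so $s = q$ and $H$ is transitive. Now $H \trianglelefteq A$ gives $A \subseteq N_{S_q}(H)$, and since $H$ is a transitive simple subgroup of $S_q$, Theorem~\ref{W} says $N_{S_q}(H)/H$ is cyclic of order dividing $q-1$. As $A/H$ embeds in $N_{S_q}(H)/H$, it follows that $[A:H]$ divides $q-1$, and the hypothesis $p \nmid (q-1)$ then yields $p \nmid [A:H]$.

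With $p \nmid [A:H]$ in hand, the conclusion is immediate from the ``in particular'' clause of Lemma~\ref{contain}, applied with $G = A$ and the simple transitive normal subgroup $H$: the only normal subgroup $N$ of $A$ with $p \mid [A:N]$ is $N = \{e\}$, so $B$ is trivial. I expect the crux of the argument to be pinning down the transitivity of $H$ together with the containment $A \subseteq N_{S_q}(H)$, since these are exactly what let Wielandt's theorem convert the arithmetic hypothesis $p \nmid (q-1)$ into the index bound $p \nmid [A:H]$ that drives Lemma~\ref{contain}.
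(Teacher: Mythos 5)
Your proposal is correct and follows essentially the same route as the paper: dispose of the affine case $A \subseteq \A_1(q)$ by order considerations, show the nonabelian simple normal subgroup is transitive, invoke Theorem~\ref{W} to get $[A:H] \mid (q-1)$, and finish with Lemma~\ref{contain}. The only cosmetic difference is that you prove transitivity of $H$ via the equal-orbit-size argument for a normal subgroup (using that $q$ is prime), whereas the paper uses double transitivity directly via a conjugation computation; both are fine.
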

\begin{proof}
  By Theorem \ref{Burn}, $A$ is either a subgroup of $\A_1(q)$, the
  group of affine linear transformations of $\F_q$ or $A$ is doubly
  transitive contains a normal subgroup that is simple, nonabelian,
  and transitive.  Since $\A_1(q)$ has order $q(q-1)$ and $p$ does not
  divide $q-1$, it follows immediately if $p \mid [A:B]$, then $B$ is
  trivial.

  Now, suppose that $A$ contains a normal subgroup $N$ that is simple
  and nonabelian.  Since $A$ is doubly transitive, $N$ must be
  transitive; to see this, note that if we have $\sigma(i) = k$ for
  $\sigma \in A$ and $\tau(i) = i$, $\tau(k) = j$, then
  $\tau \sigma \tau^{-1}(i) = j$, for $i,j,k \in \{ 1, \dots q \}$.
  Applying Theorem \ref{W}, we see that $[A:N]$ divides $q-1$.  Thus,
  Lemma \ref{contain} implies any normal subgroup $B$ of $A$ such that
  $p \mid [A:B]$ must be trivial, as desired.
  
\end{proof}

The following is now an immediate consequence of Proposition
\ref{trivial} since for any transitive subgroup $A$ of $S_q$, we must
have $q \mid |A|$.  

\begin{lemma}\label{pq}
Let $p < q$ be primes with $p \nmid (q-1)$.  
If $A$ is a transitive subgroup of $S_p$, and $B$ is a normal subgroup
of  $A$ with $p \mid [A:B]$, then $q \mid [A:B]$.
\end{lemma}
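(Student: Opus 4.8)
The plan is to derive this immediately from Proposition \ref{trivial} together with the orbit–stabilizer theorem; indeed, the preceding remark already signals that the lemma is an \emph{immediate consequence} of that proposition. Under the standing hypotheses that $p < q$ are primes with $p \nmid (q-1)$, that $A$ is a transitive subgroup of $S_q$, and that $B$ is a normal subgroup of $A$ with $p \mid [A:B]$, Proposition \ref{trivial} already tells us that $B$ must be the trivial group $\{ e \}$. So the entire content of the present lemma reduces to computing $[A:B]$ in this degenerate case and verifying divisibility by $q$.

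First I would record the bookkeeping identity: since $B = \{ e \}$, we have $[A:B] = |A|$. Then I would invoke transitivity. Because $A$ acts transitively on the $q$-element set $\{ 1, \dots, q \}$, the action has a single orbit, of size $q$; writing $A_1 = \{ g \in A : g(1) = 1 \}$ for the stabilizer of the point $1$, the orbit–stabilizer theorem gives $q = [A : A_1]$, and in particular $q \mid |A|$. Combining these two observations yields $q \mid |A| = [A:B]$, which is exactly the assertion of the lemma.

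The hard part — such as it is — lies entirely upstream, in Proposition \ref{trivial}, whose proof rests on the Burnside–Wielandt structure theory recorded in Theorems \ref{Burn} and \ref{W} and on Lemma \ref{contain}. Once that proposition is granted, no genuine obstacle remains here: the only points requiring care are the trivial-subgroup identity $[A:B] = |A|$ and the elementary fact that a transitive action on $q$ points forces $q$ to divide the group order. I therefore expect the write-up to be just a few lines, with all the substantive work having been discharged earlier in the section.
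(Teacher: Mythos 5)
Your proof is correct and is exactly the argument the paper uses: Lemma \ref{pq} is deduced from Proposition \ref{trivial} by noting that $B$ must be trivial, so $[A:B]=|A|$, which is divisible by $q$ by orbit–stabilizer since $A$ is transitive on $q$ points. (You also correctly read the hypothesis as ``transitive subgroup of $S_q$,'' which is what the paper intends despite the typo.)
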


\section{Proof of Theorem \ref{main-thm}} \label{main-section}

\begin{lemma}\label{product-index}
  Let $A \subseteq \prod_{i=1}^n G_i$ for some finite groups
  $G_i$.  Let $B$ be a normal subgroup of $A$.  Let $\pi_i$ denote the
  projection from $\prod_{i=1}^n G_i$ to its $i$-th component.  Then
  there are normal subgroups $M_i$ of $\pi(A_i)$, for $i=1, \dots, n$
  such that
  \begin{equation}\label{index}
  [A:B] = \prod_{i=1}^n [\pi_i(A): M_i]
  \end{equation}
\end{lemma}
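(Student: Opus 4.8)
The plan is to induct on the number of factors $n$. The base case $n = 1$ is immediate: here $\pi_1$ is the identity on $A$, so taking $M_1 = B$ makes the claimed identity read $[A:B] = [\pi_1(A):M_1]$, which holds by definition. For the inductive step I would peel off the first coordinate. Writing $\prod_{i=1}^n G_i = G_1 \times \left(\prod_{i=2}^n G_i\right)$, set $K = \ker(\pi_1|_A) = A \cap \left(\{e\} \times \prod_{i=2}^n G_i\right)$. This $K$ is normal in $A$ and may be regarded as a subgroup of $\prod_{i=2}^n G_i$, so the inductive hypothesis will apply to it.

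Since $B$ and $K$ are both normal in $A$, the product $BK$ is a subgroup of $A$ that is normal in $A$ and satisfies $B \le BK \le A$, so I would factor the index through it as
\[ [A:B] = [A:BK]\,[BK:B]. \]
For the first factor, applying $\pi_1$ and using $\pi_1(K) = \{e\}$ identifies $A/BK$ with $\pi_1(A)/\pi_1(B)$, giving $[A:BK] = [\pi_1(A):\pi_1(B)]$; I would set $M_1 = \pi_1(B)$, which is normal in $\pi_1(A)$. For the second factor, the second isomorphism theorem gives $BK/B \cong K/(K\cap B)$, so $[BK:B] = [K:K\cap B]$. As $K \le \prod_{i=2}^n G_i$ and $K\cap B$ is normal in $K$, the inductive hypothesis produces normal subgroups $M_2, \dots, M_n$ of the projections of $K$ whose indices multiply to $[K:K\cap B]$, supplying the remaining factors.

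The hard part, and the step I would spend the most care on, is reconciling the groups in which the remaining factors are computed with the groups $\pi_i(A)$ demanded by the statement: the induction naturally hands back a normal subgroup of $\pi_i(K)$, and in general $\pi_i(K)$ is a \emph{proper} subgroup of $\pi_i(A)$. What comes for free is the normality: because $K \trianglelefteq A$ and $\pi_i$ restricts to a surjective homomorphism $A \to \pi_i(A)$, one gets $\pi_i(K) \trianglelefteq \pi_i(A)$, and similarly $\pi_i(K\cap B) \trianglelefteq \pi_i(A)$. Thus each remaining factor has the shape $[\pi_i(K):M_i]$ with $M_i \le \pi_i(K) \le \pi_i(A)$, all three normal in $\pi_i(A)$. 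The crux is then to realize the prescribed number $[\pi_i(K):M_i]$ as the index $[\pi_i(A):M_i']$ of an actual normal subgroup of the full group $\pi_i(A)$ — equivalently, to absorb the discrepancy $[\pi_i(A):\pi_i(K)]$. I would resolve this not slot-by-slot but by exploiting the freedom to redistribute the total index $[A:B]$ across all $n$ coordinates at once, choosing the $M_i'$ with $\pi_i(B) \le M_i' \le \pi_i(A)$ so that the product of the $[\pi_i(A):M_i']$ collapses from $\prod_i [\pi_i(A):\pi_i(B)]$ down to $[A:B]$; this is exactly the point at which the subdirect (Goursat-type) structure relating the various $\pi_i(A)$ must be used, and it is where any purely formal argument breaks down and genuine input about the groups is required.
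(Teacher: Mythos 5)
Your decomposition is the same one the paper uses: peel off one coordinate via $[A:B]=[\pi_j(A):\pi_j(B)]\cdot[K:K\cap B]$ with $K=\ker(\pi_j)\cap A$, and induct on the remaining factors. The difference is that you stop and flag the step the paper passes over in silence: the inductive hypothesis, applied to $K\subseteq\prod_{i\neq j}G_i$, returns normal subgroups of the groups $\pi_i(K)$, whereas the statement demands normal subgroups of the generally strictly larger groups $\pi_i(A)$ (the paper's equation \eqref{index3} simply writes $\pi_i(A)$ where the induction only delivers $\pi_i(\ker(\pi_n)\cap A)$). You are right that this is the crux and that no purely formal redistribution closes it, but the correct conclusion is stronger than you suggest: the lemma as stated is false, so the ``Goursat-type input'' you hope to invoke does not exist. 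Take $n=2$, $G_1=G_2=S_4$, let $A=\{(g,h)\in S_4\times S_4:\operatorname{sgn}(g)=\operatorname{sgn}(h)\}$ and $B=V_4\times\{e\}$ with $V_4$ the Klein four-group of double transpositions. Then $B\trianglelefteq S_4\times S_4$, hence $B\trianglelefteq A$, and $[A:B]=288/4=72$; but $\pi_1(A)=\pi_2(A)=S_4$, whose normal subgroups have indices $1,2,6,24$, and $72$ is not a product of two numbers from $\{1,2,6,24\}$. Running your induction on this example, the offending factor is $[K:K\cap B]$, which is the index of a normal subgroup of $\pi_i(K)\cong A_4$ but of no normal subgroup of $S_4$.

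So neither your argument nor the paper's proves the statement, because the statement is not true. What your induction does prove, with no extra work, is the weaker assertion that $[A:B]=\prod_i[H_i:M_i]$ where each $H_i$ is a \emph{subnormal} subgroup of $\pi_i(A)$ and $M_i\trianglelefteq H_i$: the normality $\pi_i(K)\trianglelefteq\pi_i(A)$ that you already observed is exactly what propagates down the recursion. That version is what the application (Lemma \ref{index2}) actually needs, because a nontrivial subnormal subgroup of a transitive subgroup of $S_q$, $q$ prime, is again transitive (its orbits all have equal length by Lemma \ref{orb}, hence length $q$), so Lemma \ref{pq} still applies to each nontrivial $H_i$. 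The productive move is therefore to restate the lemma in that form rather than to try to absorb the discrepancy $[\pi_i(A):\pi_i(K)]$, which the counterexample shows cannot be done.
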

\begin{proof}
  We proceed by induction.  The base case $n=1$ is obvious.  Now if $A \subseteq \prod_{i=1}^n G_i$ for some $n > 1$, then
\begin{equation}\label{indexab}
  [A:B] = [\ker(\pi_n) \cap A : \ker(\pi_n) \cap B]  \cdot [\pi_n(A): \pi_n(B)].
\end{equation}
Since $\ker(\pi_n) \cap A$ can be identified with a subgroup of $\prod_{i=1}^{n-1} G_i$, we may apply the inductive hypothesis to obtain
\begin{equation}\label{index3}
  [\ker(\pi_n) \cap A : \ker(\pi_n) \cap B]  =  \prod_{i=1}^{n-1} [\pi_i(A): M_i]
\end{equation}
for some normal subgroups $M_i$ of $\pi_i(A)$ for $i = 1, \dots, n-1$.
Combining \eqref{indexab} and \eqref{index3} gives \eqref{index}.  

\end{proof}

\begin{lemma}\label{index2}
  Let $A \subseteq \prod_{i=1}^n S_q$.  Let $B$ be a normal subgroup
  of $A$.  Let $\pi_i$ denote the projection from $\prod_{i=1}^n G_i$
  to its $i$-th component.  Suppose that $\pi_i(A)$ is a transitive
  subgroup of $S_q$ for all $i$.  Let $p < q$ be a prime that does not
  divide $q-1$.  Then if $p \mid [A:B]$, we also have $q \mid [A:B].$
\end{lemma}
\begin{proof}
  By Lemma \ref{index}, there are normal subgroups $M_i$ of
  $\pi(A_i)$, for $i=1, \dots, n$ such that
  $[A:B] = \prod_{i=1}^n [\pi_i(A): M_i]$.  By Lemma \ref{pq},
  for each $i$, we have $p \mid [\pi_i(A): M_i]$ whenever
  $q \mid [\pi_i(A): M_i]$.  Thus, if $p \mid [A:B]$, then $p$
  divides some $[\pi_i(A): M_i]$, which means that $q$ divides some
  $[\pi_i(A): M_i]$ and thus divides $[A:B]$, as desired.  
  
\end{proof}

\begin{proof}[Proof of \ref{main-thm}]
  Suppose that there is a subgroup $G$ of $S_{pq}$ that contains a
  transitive subgroup $N_1$ and a non-transitive subgroup $N_2$ such
  that $G/N_1 \cong G/N_2$.  By Lemma \ref{theta}, there is an
  embedding $\theta: G \lra S_p \wr S_q$ satisfying conditions
  (i)--(iii) of the lemma.  Let $M_2 = \ker p_1 \cap \theta(G)$; the
  $M_2$ is normal in $\theta(G)$ and contains $N_2$.  Since
  $\theta(G)/\theta(N_1) \cong \theta(G)/\theta(N_2)$, there is a
  normal subgroup $M_1$ of $\theta(G)$ that is normal in
  $\theta(G)/M_1 \cong \theta(G)/M_2$. Since
  $M_1 = \ker p_1 \cap \theta(G)$, we have
  $|M_1| = |M_2 \cap M_1| \cdot |\pi(M_1)|$.  Since $|M_1| = |M_2|$,
  this means that $[M_1: M_2 \cap M_1 ] = p_1 (M_1)$.  Now, $p_1(M_1)$
  must be a transitive subgroup of $S_p$, because $M_1$ is transitive
  on $\{ 1, \dots, p \} \times \{ 1, \dots q \}$.  Therefore, we have
  that $p$ divides $[M_1: M_2 \cap M_1 ]$ which in turns divides
  $|S_p| = p!$.  Now, $M_1$ is isomorphic to a subgroup $H$ of $S_q^p$
  such that $\pi_i(H)$ is transitive for each $i$.  Since
  $M_1 \cap M_2$ is normal in $M_1$, it follows from \ref{index2} that
  if since $p$ divides $[M_1: M_1 \cap M_2]$, then do does $q$.  But
  $[M_1: M_1 \cap M_2]$ divides $p!$ which is not divisible by $q$ so
  we have a contradiction.

\end{proof}

\providecommand{\MR}{\relax\ifhmode\unskip\space\fi MR }
\providecommand{\MRhref}[2]{%
  \href{http://www.ams.org/mathscinet-getitem?mr=#1}{#2}
}
\providecommand{\href}[2]{#2}

\end{document}